\newcommand{\Ann}{\text{\rm Ann}}
\newcommand{\Ass}{\text{\rm Ass}}
\renewcommand{\le}{\leqslant}
\renewcommand{\ge}{\geqslant}
\theoremstyle{plain}
\newtheorem{theorem}{Theorem}[section]
\newtheorem{lemma}[theorem]{Lemma}
\newtheorem{corollary}[theorem]{Corollary}
\theoremstyle{definition}
\newtheorem{example}[theorem]{Example}
\newtheorem*{application}{Application}
\theoremstyle{remark}
\numberwithin{equation}{theorem}
\title[Special primary decompositions in multigraded modules]
{Existence of special primary decompositions in multigraded modules}
\author{Dipankar Ghosh}
\address{Department of Mathematics, Indian Institute of Technology Bombay, Powai, Mumbai 400076, India}
\email{dipankar@math.iitb.ac.in}
\subjclass[2010]{Primary 13A17, 13E05; Secondary 13D45}
\keywords{Associate primes, primary decompositions, multigraded modules, local cohomology.}
\begin{document}

\begin{abstract} % 110 words
 Let $R=\bigoplus_{\underline{n} \in \mathbb{N}^t}R_{\underline{n}}$ be a commutative Noetherian $\mathbb{N}^t$-graded ring,
 and $L = \bigoplus_{\underline{n}\in\mathbb{N}^t}L_{\underline{n}}$ be a finitely generated $\mathbb{N}^t$-graded $R$-module.
 We prove that there exists a positive integer $k$ such that for any $\underline{n} \in \mathbb{N}^t$ with
 $L_{\underline{n}} \neq 0$, there exists a primary decomposition of the zero submodule
 $O_{\underline{n}}$ of $L_{\underline{n}}$ such that for any $P \in \Ass_{R_0}(L_{\underline{n}})$, the $P$-primary
 component $Q$ in that primary decomposition contains $P^k L_{\underline{n}}$. We also give an example which shows
 that not all primary decompositions of $O_{\underline{n}}$ in $L_{\underline{n}}$ have this property. As an application
 of our result, we prove that there exists a fixed positive integer $l$ such that the $0^{\rm th}$ local cohomology
 $H_I^0(L_{\underline{n}}) = \big(0 :_{L_{\underline{n}}} I^l\big)$ for all ideals $I$ of $R_0$ and for all
 $\underline{n} \in \mathbb{N}^t$.
\end{abstract}

\maketitle

\section{Introduction}\label{Introduction}
 Let $A$ be a commutative Noetherian ring with unity, and let $N\subsetneqq M$ be two finitely generated $A$-modules.
 Let $N = Q_1\cap Q_2\cap \cdots \cap Q_r$ be an irredundant and minimal primary decomposition of $N$ in $M$,
 where $Q_i$ is a $P_i$-primary submodule of $M$, i.e., $\Ass_A(M/Q_i) = \{P_i\}$ for all $i = 1,2,\ldots,r$.
 We call $Q_i$ as a $P_i$-primary component of $N$ in $M$. In this case, we have
 $\Ass_A(M/N) = \{P_1,P_2,\ldots,P_r\}$. Note that $\sqrt{\Ann_A(M/Q_i)} = P_i$, and hence there exists some
 positive integer $s_i$ such that $P_i^{s_i} M\subseteq Q_i$ for each $i = 1,2,\ldots,r$. From now onwards, by a
 primary decomposition, we always mean an irredundant and minimal primary decomposition unless explicitly stated otherwise.

 Through out this article, we denote $R = \bigoplus_{\underline{n}\in\mathbb{N}^t}R_{\underline{n}}$ as a commutative
 Noetherian $\mathbb{N}^t$-graded ring with unity (where $\mathbb{N}$ is the collection of all non-negative integers and
 $t$ is any fixed positive integer) and $L=\bigoplus_{\underline{n}\in\mathbb{N}^t}L_{\underline{n}}$ as a finitely generated
 $\mathbb{N}^t$-graded $R$-module unless explicitly stated otherwise. Set $A = R_{(0,\ldots,0)}$. Note that $A$ is a
 Noetherian ring, and each $L_{\underline{n}}$ is a finitely generated $A$-module. {\it We denote $O_{\underline{n}}$
 as the zero submodule of $L_{\underline{n}}$ for each $\underline{n}\in\mathbb{N}^t$}.
 
 For each $\underline{n} \in \mathbb{N}^t$, fix a primary decomposition 
 %\begin{equation}\label{Introduction: primary decompositions: equation 1}
 \begin{center}
  $(\dag)$ \hfill $O_{\underline{n}} = Q_{\underline{n},1}\cap Q_{\underline{n},2} \cap\cdots\cap Q_{\underline{n},r_{\underline{n}}}$ \hfill \;
 \end{center}
 %\end{equation}
 of $O_{\underline{n}}$ in $L_{\underline{n}}$, where $Q_{\underline{n},i}$ is a $P_{\underline{n},i}$-primary
 component of $O_{\underline{n}}$ in $L_{\underline{n}}$. Then there exists a positive integer $s_i(\underline{n})$
 such that $P_{\underline{n},i}^{s_i(\underline{n})} L_{\underline{n}} \subseteq Q_{\underline{n},i}$ for each
 $\underline{n}\in\mathbb{N}^t$ and $i = 1,2,\ldots,r_{\underline{n}}$. A natural question arises that ``can we
 choose a primary decomposition $(\dag)$ of each $O_{\underline{n}}$ for which we have
 $s_i(\underline{n})$ bounded irrespective of $i$ and $\underline{n}$?".
 
 In this article, we see that there exists some primary decomposition
 $(\dag)$ of each $O_{\underline{n}}$ for which
 we can choose $s_i(\underline{n})$ in such a way that it is bounded
 (see Theorem~\ref{theorem: primary decomposition}). This confirms a conjecture by Tony J. Puthenpurakal. In an
 example, we also show that not all primary decompositions of $O_{\underline{n}}$ have this property
 (see Example~\ref{counter example}).
 \begin{application}
 Suppose $I$ is an ideal of $A$, and $M$ is an $A$-module. Recall that the $0^{\rm th}$ local cohomology
 module $H_I^0(M)$ of $M$ with respect to $I$ is defined to be
 \[
   H_I^0(M) := \big\{x\in M ~|~ I^n x = 0\mbox{ for some }n\in \mathbb{N}\big\} =
   \bigcup_{n\in \mathbb{N}}\left(0 :_M I^n\right).
 \]
 If $M$ is a Noetherian $A$-module, then it is easy to see that $H_I^0(M) = \big(0 :_M I^e\big)$ for some $e \in \mathbb{N}$,
 where $e$ depends on $M$ as well as $I$. If $L = \bigoplus_{\underline{n}\in\mathbb{N}^t}L_{\underline{n}}$ is a finitely
 generated $\mathbb{N}^t$-graded $R$-module, then as an application of our main result on primary decomposition, we prove
 that there exists a fixed positive integer $l$ such that
 $H_I^0(L_{\underline{n}}) = \left(0 :_{L_{\underline{n}}} I^l\right)$ for all ideals $I$ of $R_0$ and for all
 $\underline{n} \in \mathbb{N}^t$ (see Theorem~\ref{theorem: application of primary decomposition}).
 \end{application}

\section{Main result}\label{Main result}
 To prove our main result (Theorem~\ref{theorem: primary decomposition}), we need some preliminaries.
 We start with the following lemma.
\begin{lemma}\label{lemma: Artin-Rees}
 Let $R = \bigoplus_{\underline{n}\in\mathbb{N}^t}R_{\underline{n}}~$ be a Noetherian $\mathbb{N}^t$-graded ring, and
 $L = \bigoplus_{\underline{n} \in \mathbb{N}^t} L_{\underline{n}}$ be a finitely generated $\mathbb{N}^t$-graded $R$-module.
 Set $A=R_{(0,\ldots,0)}$. Let $J$ be an ideal of $A$. Then there exists a positive integer $k$ such that
 \[
  J^m L_{\underline{n}} \cap H_J^0(L_{\underline{n}}) = O_{\underline{n}} \quad \forall~ 
  \underline{n}\in\mathbb{N}^t\mbox{ and }\forall~ m\ge k.
 \]
\end{lemma}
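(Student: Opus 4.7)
The plan is to pass from the statement about each graded piece $L_{\underline{n}}$ to a single statement about $L$ as an $R$-module, where classical Artin--Rees applies. Define
\[
 H := \bigoplus_{\underline{n}\in\mathbb{N}^t} H_J^0(L_{\underline{n}}),
\]
viewed as a graded $A$-submodule of $L$. The first check is that $H$ is actually an $R$-submodule: if $x\in L_{\underline{n}}$ is annihilated by $J^e$ and $r\in R_{\underline{m}}$, then $J^e(rx) = r(J^e x) = 0$ because $J\subseteq A = R_{(0,\ldots,0)}$ commutes through $r$; hence $rx\in H$. Thus $H$ is a graded $R$-submodule of $L$.

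Since $R$ is Noetherian and $L$ is finitely generated over $R$, the submodule $H$ is itself a finitely generated graded $R$-module. Each of its (finitely many) homogeneous generators lies in some $H_J^0(L_{\underline{n}_i})$, hence is annihilated by a power of $J$; taking a common bound gives a positive integer $a$ with $J^a H = 0$.

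Next I would apply the Artin--Rees lemma in the $R$-module $L$, to the $R$-submodule $H$ and the ideal $JR\subseteq R$. This produces an integer $b$ such that
\[
 J^m L \cap H = J^{m-b}\bigl(J^b L \cap H\bigr) \quad \text{for all } m\ge b.
\]
(Here I use that $(JR)^m L = J^m L$, since $J$ lies in the degree-$\underline{0}$ part of $R$.) Setting $k := a+b$, for $m\ge k$ we get $J^m L \cap H \subseteq J^{m-b} H \subseteq J^a H = 0$.

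Finally, because $J\subseteq R_{(0,\ldots,0)}$, multiplication by $J^m$ preserves multidegree, so $J^m L$ is a graded submodule with $(J^m L)_{\underline{n}} = J^m L_{\underline{n}}$, and $H_{\underline{n}} = H_J^0(L_{\underline{n}})$. Taking the $\underline{n}$-th graded piece of the equality $J^m L \cap H = 0$ yields $J^m L_{\underline{n}} \cap H_J^0(L_{\underline{n}}) = O_{\underline{n}}$ for all $\underline{n}\in\mathbb{N}^t$ and all $m\ge k$, as desired. The only subtlety I anticipate is justifying that $H$ is an $R$-submodule and finitely generated; everything else is a routine application of Artin--Rees, so there is no serious obstacle.
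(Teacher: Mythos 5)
Your proof is correct and follows essentially the same route as the paper: both arguments apply the Artin--Rees lemma in $L$ with respect to the extended ideal $JR$, note that the $J$-torsion submodule (your $H$, which coincides with $H_{JR}^0(L)$) is killed by a bounded power of $J$, and then read off the graded pieces. The only cosmetic difference is that you bound the annihilating power via finitely many homogeneous generators of $H$, whereas the paper uses the stabilization of the chain $\left(0:_L I\right) \subseteq \left(0:_L I^2\right) \subseteq \cdots$, which is the same Noetherian argument.
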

\begin{proof}
 Let $I=JR$ be the ideal of $R$ generated by $J$. Since $R$ is Noetherian and $L$ is a finitely generated $R$-module,
 then by Artin-Rees lemma, there exists a positive integer $c$ such that 
 \begin{align}
 (I^m L)\cap H_I^0(L)
                &=I^{m-c} \big((I^c L)\cap H_I^0(L)\big)\quad\mbox{for all }m\ge c\nonumber\\
                &\subseteq I^{m-c} \left(H_I^0(L)\right) \quad\mbox{for all }m\ge c. \label{lemma: Artin-Rees: equation 1}
 \end{align}
 Now consider the ascending chain of submodules of $L$:
 \[\left(0:_L I\right) \subseteq \left(0:_L I^2\right) \subseteq \left(0:_L I^3\right)\subseteq \cdots.\]
 Since $L$ is a Noetherian $R$-module, there exists some $l$ such that 
 \begin{equation}\label{lemma: Artin-Rees: equation 2}
  \left(0:_L I^l\right) = \left(0:_L I^{l+1}\right) = \cdots = H_I^0(L).
 \end{equation}
 Set $k:= c+l$. Then from \eqref{lemma: Artin-Rees: equation 1} and \eqref{lemma: Artin-Rees: equation 2}, for all $m \ge k$,
 we have
 \[(I^m L)\cap H_I^0(L) \subseteq I^{m-c} \left(0 :_L I^{m-c}\right) = 0,\]
 which gives $~ J^m L_{\underline{n}}\cap H_J^0(L_{\underline{n}}) = O_{\underline{n}}~$
 for all $\underline{n} \in \mathbb{N}^t$ and for all $m\ge k$.
\end{proof}
 An immediate corollary is the following.
\begin{corollary}\label{corollary: lemma: Artin-Rees}
 Let $R=\bigoplus_{\underline{n}\in\mathbb{N}^t}R_{\underline{n}}$ be a Noetherian $\mathbb{N}^t$-graded ring, and
 $L=\bigoplus_{\underline{n}\in\mathbb{N}^t}L_{\underline{n}}$ be a finitely generated $\mathbb{N}^t$-graded $R$-module.
 Set $A=R_{(0,\ldots,0)}$. Then there exists a positive integer $k$ such that for any
 $~ P \in \bigcup_{\underline{n}\in\mathbb{N}^t}\Ass_A(L_{\underline{n}})$, we have
 \[
  P^k L_{\underline{n}} \cap H_P^0(L_{\underline{n}}) = O_{\underline{n}} \quad \forall~ \underline{n}\in\mathbb{N}^t.
 \]
\end{corollary}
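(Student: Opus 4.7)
The plan is to reduce the statement, which is a uniformity result over all primes $P$ in the (potentially infinite-looking) set $S := \bigcup_{\underline{n} \in \mathbb{N}^t} \Ass_A(L_{\underline{n}})$, to a uniformity result over a finite set, and then apply Lemma~\ref{lemma: Artin-Rees} prime-by-prime.

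First I would invoke the known fact that for a Noetherian $\mathbb{N}^t$-graded ring $R$ and a finitely generated $\mathbb{N}^t$-graded $R$-module $L$, the set $S = \bigcup_{\underline{n}} \Ass_{A}(L_{\underline{n}})$ is \emph{finite}. This is the multigraded analogue of Brodmann's theorem on asymptotic stability of associated primes, established in this generality by West and others. Granted this finiteness, write $S = \{P_1, P_2, \ldots, P_s\}$.

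Next, for each $i = 1, 2, \ldots, s$, apply Lemma~\ref{lemma: Artin-Rees} with $J = P_i$ (an ideal of $A$) to obtain a positive integer $k_i$ such that
\[
 P_i^m L_{\underline{n}} \cap H_{P_i}^0(L_{\underline{n}}) = O_{\underline{n}} \quad \forall~\underline{n}\in\mathbb{N}^t \mbox{ and } \forall~m \ge k_i.
\]
Finally, set $k := \max\{k_1, k_2, \ldots, k_s\}$. For any $P \in S$, say $P = P_i$, we have $k \ge k_i$, so the displayed identity of the corollary follows at once.

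The only nontrivial input is the finiteness of $S$; if the author wishes the paper to be self-contained this would have to be cited (or proved in a preceding lemma), but given the phrase ``immediate corollary'' I expect this finiteness is treated as standard background. Everything else is just packaging Lemma~\ref{lemma: Artin-Rees} over a finite index set and taking the maximum of the resulting exponents.
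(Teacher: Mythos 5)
Your proposal is correct and matches the paper's proof essentially verbatim: the paper cites West's result (\cite[Lemma~3.2]{We04}) for the finiteness of $\bigcup_{\underline{n}\in\mathbb{N}^t}\Ass_A(L_{\underline{n}})$, applies Lemma~\ref{lemma: Artin-Rees} to each of the finitely many primes, and takes $k$ to be the maximum of the resulting exponents, exactly as you do.
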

\begin{proof}
 By \cite[Lemma~3.2]{We04}, we may assume that 
 \[ \bigcup_{\underline{n}\in\mathbb{N}^t} \Ass_A(L_{\underline{n}}) = \left\{P_1,P_2,\ldots,P_l\right\}.\]
 From Lemma~\ref{lemma: Artin-Rees}, for each $P_i$ ($1\le i\le l$), there exists some $k_i$ such that
 \[
   P_i^m L_{\underline{n}} \cap H_{P_i}^0(L_{\underline{n}})
   = O_{\underline{n}} \quad \forall~ \underline{n}\in\mathbb{N}^t\mbox{ and }\forall~ m\ge k_i.
 \]
 Now the corollary follows by taking $k:=\max\{k_i:1\le i\le l\}$.
\end{proof}
Let us recall the following result from \cite[Theorem~1.1]{Ya02}.
\begin{theorem}\label{theorem: compatibility}
 Let $A$ be a Noetherian ring, and $N\subsetneqq M$ be two finitely generated $A$-modules such that
 $\Ass_A(M/N) = \{P_1,\ldots,P_r\}$. Let $Q_i$ be a $P_i$-primary component of $N$ in $M$ for each $1\le i\le r$.
 Then $N = Q_1\cap \cdots \cap Q_r$, which is necessarily an irredundant and minimal primary decomposition of $N$ in $M$.
\end{theorem}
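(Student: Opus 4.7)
My plan is to reduce to the case $N = 0$ by passing to $M/N$ (replacing each $Q_i$ by $Q_i/N$), so that $\Ass_A(M) = \{P_1,\ldots,P_r\}$ and each $Q_i$ is a $P_i$-primary component of the zero submodule of $M$. The goal then is to show $\bigcap_{i=1}^r Q_i = 0$; the irredundancy and minimality of the resulting decomposition are automatic from $|\Ass_A(M)| = r$. I would establish this equality via a one-swap lemma iterated over all indices, starting from any fixed primary decomposition of $0$ in $M$.

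One-swap lemma: given a primary decomposition $0 = T_1\cap\cdots\cap T_r$ with each $T_i$ being $P_i$-primary, and any other $P_j$-primary component $Q_j$ of $0$, the intersection
\[
 K := T_1 \cap \cdots \cap T_{j-1} \cap Q_j \cap T_{j+1} \cap \cdots \cap T_r
\]
also equals $0$. To prove this, first note that $K \cap T_j = \bigl(\bigcap_i T_i\bigr) \cap Q_j = 0$, so $K$ embeds into $M/T_j$, giving $\Ass_A(K) \subseteq \Ass_A(M/T_j) = \{P_j\}$. Second, fix a primary decomposition $0 = Q_j \cap R_1 \cap \cdots \cap R_{j-1} \cap R_{j+1} \cap \cdots \cap R_r$ realizing $Q_j$ as a primary component (each $R_i$ being $P_i$-primary), and set $R_{-j} := \bigcap_{i\ne j} R_i$. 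Since $K \subseteq Q_j$ and $Q_j \cap R_{-j} = 0$, we get $K \cap R_{-j} = 0$; thus $K$ embeds into $M/R_{-j}$, which in turn embeds into $\bigoplus_{i\ne j} M/R_i$. Hence $\Ass_A(K) \subseteq \bigcup_{i\ne j} \Ass_A(M/R_i) = \{P_i : i\ne j\}$. Combining the two inclusions, $\Ass_A(K) \subseteq \{P_j\}\cap\{P_i : i\ne j\} = \emptyset$, which forces $K = 0$ (as $K$ is a finitely generated module over the Noetherian ring $A$).

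The step I expect to be the main conceptual obstacle is finding the right proof of the one-swap lemma: the naive approach of picking $x \in K$ with $\Ann_A(x) = P_j$ and seeking a contradiction via some $R_k$ with $x \notin R_k$ closes cleanly only when $P_j$ is maximal in $\Ass_A(M)$, since one only extracts a strict containment $P_j \subsetneq P_k$. The way around this is the strategy above, which bypasses picking an element at all and instead extracts two upper bounds on $\Ass_A(K)$ from disjoint subsets of $\Ass_A(M)$ — one from the $T$-decomposition and one from the $R$-decomposition — whose intersection is empty. With the one-swap lemma in hand, iterating it index by index (swap $T_1$ for $Q_1$, then $T_2$ for $Q_2$, and so on) yields $0 = Q_1 \cap \cdots \cap Q_r$ in $M/N$, which lifts back to the desired primary decomposition $N = Q_1 \cap \cdots \cap Q_r$ in the original module $M$.
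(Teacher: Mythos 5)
Your proof is correct. There is, however, no in-paper argument to compare it with: the paper states this result purely as a recollection of Yao's compatibility theorem, citing \cite[Theorem~1.1]{Ya02}, and gives no proof. What you have written is a complete, self-contained proof, and it is in the same spirit as the known swapping argument for compatibility: reduce to $N=0$ (harmless, since every primary component of $N$ contains $N$), replace the components of one fixed irredundant minimal decomposition by the given $Q_i$ one index at a time, and kill the swapped intersection $K$ by trapping $\Ass_A(K)$ between two disjoint sets --- $\Ass_A(K)\subseteq\{P_j\}$ because $K\cap T_j=0$ gives an embedding $K\hookrightarrow M/T_j$, and $\Ass_A(K)\subseteq\{P_i : i\ne j\}$ because $K\subseteq Q_j$ and $Q_j\cap\bigcap_{i\ne j}R_i=0$ give an embedding into $\bigoplus_{i\ne j}M/R_i$ --- then using that a nonzero module over a Noetherian ring has an associated prime. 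The supporting details all check out: the decomposition realizing $Q_j$ does consist of exactly one $P_i$-primary term for each $i\ne j$, because it is irredundant and minimal and $\Ass_A(M)=\{P_1,\dots,P_r\}$; the iteration is legitimate because your one-swap lemma only requires the current family to consist of $P_i$-primary submodules with zero intersection, a property preserved at each step; and irredundancy and minimality of the final decomposition follow, as you indicate, from the fact that the $r$ distinct primes all lie in $\Ass_A(M/N)$, so no component can be omitted. In short, your proposal supplies an actual proof where the paper supplies only a citation, and the route you take is essentially the standard one behind the cited theorem.
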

 We are now in a position to prove our main result.
\begin{theorem}\label{theorem: primary decomposition}
 Let $R=\bigoplus_{\underline{n}\in\mathbb{N}^t}R_{\underline{n}}$ be a Noetherian $\mathbb{N}^t$-graded ring, and
 $L=\bigoplus_{\underline{n}\in\mathbb{N}^t}L_{\underline{n}}$ be a finitely generated $\mathbb{N}^t$-graded $R$-module.
 Set $A=R_{(0,\ldots,0)}$.
 Then there exists a positive integer $k$ such that for each $\underline{n}\in\mathbb{N}^t$ with
 $L_{\underline{n}}\neq 0$, there exists a primary decomposition of the zero submodule $O_{\underline{n}}$ of
 $L_{\underline{n}}$:
 \[
   O_{\underline{n}}= Q_{\underline{n},1}\cap Q_{\underline{n},2}\cap \cdots\cap Q_{\underline{n},r_{\underline{n}}},
 \]
 where $Q_{\underline{n},i}$ is a $P_{\underline{n},i}$-primary component of $O_{\underline{n}}$ in
 $L_{\underline{n}}$ satisfying 
 \[
  P_{\underline{n},i}^k L_{\underline{n}} \subseteq Q_{\underline{n},i} \quad \forall~\underline{n}\in\mathbb{N}^t
  \mbox{ and }~\forall~ i = 1,2,\ldots,r_{\underline{n}}.
 \]
\end{theorem}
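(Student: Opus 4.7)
The plan is to apply Corollary~\ref{corollary: lemma: Artin-Rees} and to construct each primary component by saturation at the associated prime. First I would use the corollary to fix a positive integer $k$ with $P^k L_{\underline{n}}\cap H_P^0(L_{\underline{n}}) = O_{\underline{n}}$ for every $\underline{n}\in\mathbb{N}^t$ and every $P\in\bigcup_{\underline{m}}\Ass_A(L_{\underline{m}})$, and claim this $k$ already serves as the bound in the theorem.

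For each $\underline{n}$ with $L_{\underline{n}}\ne O_{\underline{n}}$, write $\Ass_A(L_{\underline{n}}) = \{P_{\underline{n},1},\ldots,P_{\underline{n},r_{\underline{n}}}\}$, set $S_i := A\setminus P_{\underline{n},i}$, and define the saturation
\[
 Q_{\underline{n},i} := \big\{x\in L_{\underline{n}} : sx\in P_{\underline{n},i}^k L_{\underline{n}}\text{ for some }s\in S_i\big\}.
\]
Visibly $P_{\underline{n},i}^k L_{\underline{n}}\subseteq Q_{\underline{n},i}$. To see that $Q_{\underline{n},i}$ is $P_{\underline{n},i}$-primary, I would use that $Q_{\underline{n},i}$ is exactly the kernel of the composite $L_{\underline{n}}\to L_{\underline{n}}/P_{\underline{n},i}^k L_{\underline{n}}\to \big(L_{\underline{n}}/P_{\underline{n},i}^k L_{\underline{n}}\big)_{P_{\underline{n},i}}$, so $L_{\underline{n}}/Q_{\underline{n},i}$ embeds into this localization. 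Every associated prime of $L_{\underline{n}}/P_{\underline{n},i}^k L_{\underline{n}}$ contains $P_{\underline{n},i}$, so after localizing at $P_{\underline{n},i}$ only $P_{\underline{n},i}$ itself can remain as an associated prime. Nonvanishing of the quotient follows from Nakayama plus $(L_{\underline{n}})_{P_{\underline{n},i}}\ne 0$, which is guaranteed by $P_{\underline{n},i}\in\Ass_A(L_{\underline{n}})$.

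The crux is showing $\bigcap_i Q_{\underline{n},i} = O_{\underline{n}}$, and this is precisely where the corollary gets used. Suppose for contradiction some nonzero $x$ lies in the intersection. Choose $P_{\underline{n},i_0}\in\Ass_A(Ax)\subseteq\Ass_A(L_{\underline{n}})$ and $y = ax\in Ax$ with $\Ann_A(y) = P_{\underline{n},i_0}$, so that $y\in H_{P_{\underline{n},i_0}}^0(L_{\underline{n}})$. Since $x\in Q_{\underline{n},i_0}$, there is $s\in S_{i_0}$ with $sx\in P_{\underline{n},i_0}^k L_{\underline{n}}$, whence
\[
 sy = a(sx)\in P_{\underline{n},i_0}^k L_{\underline{n}}\cap H_{P_{\underline{n},i_0}}^0(L_{\underline{n}}) = O_{\underline{n}}.
\]
Hence $s\in\Ann_A(y) = P_{\underline{n},i_0}$, contradicting $s\in S_{i_0}$. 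Irredundancy of the resulting decomposition is then automatic: if some $Q_{\underline{n},i}$ could be omitted, then $L_{\underline{n}}$ would embed into $\bigoplus_{j\ne i}L_{\underline{n}}/Q_{\underline{n},j}$, forcing $P_{\underline{n},i}\notin\Ass_A(L_{\underline{n}})$.

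The hard part is the vanishing of $\bigcap_i Q_{\underline{n},i}$; the other ingredients (primariness of the saturation, the containment $P_{\underline{n},i}^k L_{\underline{n}}\subseteq Q_{\underline{n},i}$, and irredundancy) are formal. The corollary is indispensable because it produces the uniform $k$ that forces the cancellation $sy = 0$ needed above; Yassemi's compatibility theorem~\ref{theorem: compatibility} is not actually required on this route, although one could alternatively deduce the vanishing of the intersection from it after checking that each $Q_{\underline{n},i}$ is a genuine $P_{\underline{n},i}$-primary component of $O_{\underline{n}}$.
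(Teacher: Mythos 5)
Your proof is correct, and it differs from the paper's in one essential step, namely how the chosen components are assembled into a decomposition of $O_{\underline{n}}$. Both arguments start from the same uniform bound, the integer $k$ of Corollary~\ref{corollary: lemma: Artin-Rees}, and both take as the $P$-primary component a $P$-primary submodule containing $P^k L_{\underline{n}}$; in your case this is the canonical choice, the saturation of $P^k L_{\underline{n}}$ at $P$, which is automatically $P$-primary because every prime in $\Ass_A\left(L_{\underline{n}}/P^k L_{\underline{n}}\right)$ contains $P$ (your localization and Nakayama steps are fine). The paper instead shows that $P \in \Ass_A\left(L_{\underline{n}}/P^k L_{\underline{n}}\right)$, extracts a $P$-primary component $Q_1$ from a decomposition of $P^k L_{\underline{n}}$, splices it with a decomposition of $H_P^0(L_{\underline{n}})$ using \eqref{theorem: primary decomposition: equation 1} to see that $Q_1$ is a $P$-primary component of $O_{\underline{n}}$, and then invokes Yao's compatibility theorem (Theorem~\ref{theorem: compatibility}) to conclude that these components, one for each associated prime, intersect in $O_{\underline{n}}$. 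You bypass \cite{Ya02} entirely: your element argument $sy = a(sx) \in P^k L_{\underline{n}} \cap H_P^0(L_{\underline{n}}) = O_{\underline{n}}$, forcing $s \in \Ann_A(y) = P_{\underline{n},i_0}$, proves the vanishing of the intersection directly, with the corollary doing the work at exactly that point. What your route buys is self-containedness (no compatibility theorem, no primary decomposition of $H_P^0(L_{\underline{n}})$, no case distinction on whether $H_P^0(L_{\underline{n}}) = L_{\underline{n}}$), at the cost of checking primariness of the saturation, irredundancy, and minimality by hand, which you do correctly; the paper's route buys brevity by outsourcing the assembly of the components to Theorem~\ref{theorem: compatibility}.
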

\begin{proof}
 Let $k$ be as in Corollary~\ref{corollary: lemma: Artin-Rees}. Fix $\underline{n} \in \mathbb{N}^t$ such that
 $L_{\underline{n}} \neq 0$. By Theorem~\ref{theorem: compatibility}, it is enough to prove that for each
 $P \in \Ass_A(L_{\underline{n}})$, there exists a $P$-primary component $Q$ of $O_{\underline{n}}$ in $L_{\underline{n}}$
 such that $P^k L_{\underline{n}} \subseteq Q$. So fix $P \in \Ass_A(L_{\underline{n}})$. From
 Corollary~\ref{corollary: lemma: Artin-Rees}, we have
 \begin{equation}\label{theorem: primary decomposition: equation 1}
  P^k L_{\underline{n}}\cap H_P^0(L_{\underline{n}}) = O_{\underline{n}}.
 \end{equation}
 
 It is easy to see that $P^k L_{\underline{n}} \neq L_{\underline{n}}$ (for instance by localization at $P$).
 If $H_P^0(L_{\underline{n}}) = L_{\underline{n}}$, then \eqref{theorem: primary decomposition: equation 1} gives
 $P^kL_{\underline{n}} = O_{\underline{n}}$, hence any $P$-primary component of $O_{\underline{n}}$ in
 $L_{\underline{n}}$ contains $P^kL_{\underline{n}}$, and hence we are through. Therefore we may as well assume
 that $H_P^0(L_{\underline{n}}) \neq L_{\underline{n}}$. Now we fix irredundant and
 minimal primary decompositions
 \begin{equation}\label{theorem: primary decomposition: equation 2}
  P^kL_{\underline{n}} = Q_1\cap\cdots\cap Q_u \quad
  \mbox{and}\quad H_P^0(L_{\underline{n}}) = Q_{u+1}\cap\cdots\cap Q_v
 \end{equation}
 of $P^kL_{\underline{n}}$ and $H_P^0(L_{\underline{n}})$ in $L_{\underline{n}}$
 respectively.
 
 We claim that $P \notin \Ass_A\left(L_{\underline{n}}/H_P^0(L_{\underline{n}})\right)$. Otherwise $P = \Ann_A(\overline{m})$
 for some $m \in L_{\underline{n}} \smallsetminus H_P^0(L_{\underline{n}})$. But then $Pm \subseteq H_P^0(L_{\underline{n}})$
 implies that $P^j(Pm) = 0$ in $L_{\underline{n}}$ for some $j \in \mathbb{N}$, i.e., $m \in H_P^0(L_{\underline{n}})$,
 which is a contradiction. Therefore
 $P \notin \Ass_A\left(L_{\underline{n}}/H_P^0(L_{\underline{n}})\right)$.
 From \eqref{theorem: primary decomposition: equation 1}, it can be noted that 
 \[
   P \in \Ass_A(L_{\underline{n}}) \subseteq   \Ass_A\left(L_{\underline{n}}/P^kL_{\underline{n}}\right) \cup
   \Ass_A\left(L_{\underline{n}}/H_P^0(L_{\underline{n}})\right).
 \]
 Since $P\notin \Ass_A\left(L_{\underline{n}}/H_P^0(L_{\underline{n}})\right)$, we have
 $P \in \Ass_A\left(L_{\underline{n}}/P^kL_{\underline{n}}\right)$,
 and hence without loss of generality, we may assume that $Q := Q_1$ is a $P$-primary submodule of $L_{\underline{n}}$.
 Now considering \eqref{theorem: primary decomposition: equation 1} and
 \eqref{theorem: primary decomposition: equation 2}, we have a primary decomposition
 \[O_{\underline{n}} = Q_1\cap \cdots \cap Q_u\cap Q_{u+1}\cap \cdots \cap Q_v,\]
 which need not be irredundant or minimal, but we get the desired $P$-primary component $Q = Q_1$ of
 $O_{\underline{n}}$ in $L_{\underline{n}}$ such that $P^k L_{\underline{n}} \subseteq Q$.
\end{proof}
 Now we give an example not all primary decompositions have $s_i(\underline{n})$ bounded.
\begin{example}\cite[page 299]{Sw97}\label{counter example}
 Let $K$ be a field. Set $A:=K[X,Y]$ as the polynomial ring in two variables $X,Y$ over $K$. Let $I=(X^2,XY)$. Set
 $R:=A[Z]$ and $L:=A/I[Z]$ with natural $\mathbb{N}$-grading structures, where $Z$ is an indeterminate. Here for each
 $n\in\mathbb{N}$, $L_n=(A/I)Z^n\cong A/I$ as $A$-modules.
 For each $O_n$ (zero submodule of $L_n$), fix the primary decomposition
 \[O_n = (x^2,xy) = (x)\cap (x^2,xy,y^{n+1}) = Q_{n,1}\cap Q_{n,2}~~\mbox{(say)},\]
 where $x,y$ are the images of $X,Y$ in $A/I$ respectively. Here $Q_{n,1}=(x)$ is a $(X)$-primary and
 $Q_{n,2}=(x^2,xy,y^{n+1})$ is a $(X,Y)$-primary submodules of $L_n$. For $n \ge 1$, the minimal $s_2(n)$ we can
 choose so that
 \[(X,Y)^{s_2(n)} L_n = (x,y)^{s_2(n)} \subseteq Q_{n,2} = (x^2,xy,y^{n+1})\]
 is $(n+1)$, which is unbounded.
\end{example}
\section{An application}\label{An application}
 As a consequence of the Theorem~\ref{theorem: primary decomposition}, we prove the following
 result (Theorem~\ref{theorem: application of primary decomposition}) which says that
 for any ideal $I$ of $A$ and for any $\underline{n}\in\mathbb{N}^t$, the $0^{\rm th}$ local cohomology module
 $H_I^0(L_{\underline{n}}) = \left(0 :_{L_{\underline{n}}} I^k\right)$, where $k$ is a fixed positive integer as
 occurring in Theorem~\ref{theorem: primary decomposition}.
 \begin{theorem}\label{theorem: application of primary decomposition}
  Let $R = \bigoplus_{\underline{n}\in\mathbb{N}^t}R_{\underline{n}}$ be a Noetherian $\mathbb{N}^t$-graded ring, and
  $L = \bigoplus_{\underline{n}\in\mathbb{N}^t}L_{\underline{n}}$ be a finitely generated $\mathbb{N}^t$-graded $R$-module.
  Set $A=R_{(0,\ldots,0)}$. Then there exists a positive integer $l$ such that for any ideal $I$ of $A$, we have
  \[ H_I^0(L_{\underline{n}}) = \left(0 :_{L_{\underline{n}}} I^l\right)\quad \forall~ \underline{n} \in \mathbb{N}^t.\]
 \end{theorem}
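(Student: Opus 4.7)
The plan is to reuse the constant $k$ furnished by Theorem~\ref{theorem: primary decomposition} and show that $l := k$ works. The inclusion $(0 :_{L_{\underline{n}}} I^k) \subseteq H_I^0(L_{\underline{n}})$ is automatic from the definition of the $0^{\rm th}$ local cohomology, so the real content of the theorem is the reverse inclusion, and the difficulty is to obtain it uniformly in both the ideal $I$ and the multidegree $\underline{n}$.

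For the reverse inclusion I fix $\underline{n}$ with $L_{\underline{n}} \neq 0$ (the case $L_{\underline{n}} = 0$ being trivial) and invoke Theorem~\ref{theorem: primary decomposition} to pin down a primary decomposition $O_{\underline{n}} = Q_{\underline{n},1} \cap \cdots \cap Q_{\underline{n},r_{\underline{n}}}$ in which each $P_{\underline{n},i}$-primary component $Q_{\underline{n},i}$ satisfies $P_{\underline{n},i}^k L_{\underline{n}} \subseteq Q_{\underline{n},i}$. Given $x \in H_I^0(L_{\underline{n}})$, it then suffices to show $I^k x \subseteq Q_{\underline{n},i}$ for every $i$ and intersect. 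I will split on whether $I \subseteq P_{\underline{n},i}$. If yes, then $I^k x \subseteq I^k L_{\underline{n}} \subseteq P_{\underline{n},i}^k L_{\underline{n}} \subseteq Q_{\underline{n},i}$, which is exactly what is needed. If no, I will pick $a \in I \setminus P_{\underline{n},i}$; since $x \in H_I^0(L_{\underline{n}})$, some power $a^N$ annihilates $x$, and $a^N \notin P_{\underline{n},i}$ together with the primariness of $Q_{\underline{n},i}$ forces $x \in Q_{\underline{n},i}$, so $I^k x \subseteq Q_{\underline{n},i}$ holds trivially.

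I expect no real obstacle once the uniform exponent $k$ from Theorem~\ref{theorem: primary decomposition} is in hand: the way $k$ enters is symmetric in every associated prime of every $L_{\underline{n}}$, and any ideal $I \subseteq P$ automatically satisfies $I^k \subseteq P^k$. The only point to verify is that the argument is genuinely uniform in $I$, which it is, because $I$ enters the case analysis only through whether it lies inside a given associated prime of $L_{\underline{n}}$, with no residual dependence on $I$ in the exponent. Thus $l = k$ works for every ideal $I$ of $A$ and every $\underline{n} \in \mathbb{N}^t$, and the bulk of the work in the present statement is really already absorbed into the preceding theorem.
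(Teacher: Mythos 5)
Your proof is correct and takes essentially the same route as the paper: both set $l=k$ from Theorem~\ref{theorem: primary decomposition}, fix the special decomposition of $O_{\underline{n}}$, and split the components according to whether $I \subseteq P_{\underline{n},i}$, using $I^k L_{\underline{n}} \subseteq P_{\underline{n},i}^k L_{\underline{n}} \subseteq Q_{\underline{n},i}$ in the containing case. The only difference is cosmetic: where the paper cites Eisenbud's Proposition~3.13.a to get $H_I^0(L_{\underline{n}}) \subseteq Q_{\underline{n},i}$ whenever $I \nsubseteq P_{\underline{n},i}$, you prove that inclusion directly from the primariness of $Q_{\underline{n},i}$ (a power of an element of $I$ outside $P_{\underline{n},i}$ kills $x$), which is a sound, self-contained substitute.
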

 \begin{proof}
  By virtue of the Theorem~\ref{theorem: primary decomposition}, there exists a positive integer $l$ such that for
  each $\underline{n} \in \mathbb{N}^t$ with $L_{\underline{n}} \neq 0$, we fix a primary decomposition of the zero
  submodule $O_{\underline{n}}$ of $L_{\underline{n}}$:
  \begin{equation}\label{theorem: application of primary decomposition: equation 1}
   O_{\underline{n}} = Q_{\underline{n},1}\cap Q_{\underline{n},2}\cap \cdots\cap Q_{\underline{n},r_{\underline{n}}},
  \end{equation}
  where $Q_{\underline{n},i}$ is a $P_{\underline{n},i}$-primary component of $O_{\underline{n}}$ in
  $L_{\underline{n}}$ satisfying 
  \begin{equation}\label{theorem: application of primary decomposition: equation 2}
   P_{\underline{n},i}^l L_{\underline{n}} \subseteq Q_{\underline{n},i} \quad \forall~\underline{n}\in\mathbb{N}^t
   \mbox{ and }~\forall~ i = 1,2,\ldots,r_{\underline{n}}.
  \end{equation}
  We claim that the theorem holds true for this $l$.
  
  Let $I$ be an arbitrary ideal of $A$. Fix an arbitrary $\underline{n} \in \mathbb{N}^t$. If
  $L_{\underline{n}} = 0$, then there is nothing to prove. So we may as well assume that $L_{\underline{n}} \neq 0$. By
  \cite[Proposition~3.13~a.]{Ei}, from \eqref{theorem: application of primary decomposition: equation 1}, we have
  \begin{equation}\label{theorem: application of primary decomposition: equation 3}
   H_I^0(L_{\underline{n}}) = \bigcap\left\{Q_{\underline{n},i} ~\middle|~ I \nsubseteq P_{\underline{n},i},
                                                                              1\le i\le r_{\underline{n}}\right\}.
  \end{equation}
  Then by using \eqref{theorem: application of primary decomposition: equation 3},
  \eqref{theorem: application of primary decomposition: equation 2} and
  \eqref{theorem: application of primary decomposition: equation 1} serially, we have
  \begin{align*}
   & \left(I^l L_{\underline{n}}\right) \cap H_I^0(L_{\underline{n}}) \\
   & \subseteq
   \left(\bigcap\left\{P_{\underline{n},i}^l L_{\underline{n}} ~\middle|~ I \subseteq P_{\underline{n},i},~ 1\le i\le r_{\underline{n}}\right\}\right)
   \bigcap \left(\bigcap\left\{Q_{\underline{n},i} ~\middle|~ I \nsubseteq P_{\underline{n},i},~ 1 \le i \le r_{\underline{n}}\right\}\right) \\
   & \subseteq
   \left(\bigcap\left\{Q_{\underline{n},i} ~\middle|~ I \subseteq P_{\underline{n},i},~ 1 \le i \le r_{\underline{n}}\right\}\right)
   \bigcap \left(\bigcap\left\{Q_{\underline{n},i} ~\middle|~ I \nsubseteq P_{\underline{n},i},~ 1 \le i \le r_{\underline{n}}\right\}\right) \\
   & = Q_{\underline{n},1}\cap Q_{\underline{n},2}\cap \cdots\cap Q_{\underline{n},r_{\underline{n}}} = O_{\underline{n}}.
  \end{align*}
  Thus for any ideal $I$ of $A$, we have
  \begin{equation}\label{theorem: application of primary decomposition: equation 4}
   \left(I^l L_{\underline{n}}\right) \cap H_I^0(L_{\underline{n}}) = 0\quad
   \forall~ \underline{n} \in \mathbb{N}^t.
  \end{equation}
  Let $x \in H_I^0(L_{\underline{n}})$. Then
  \[ 
   I^l x \subseteq \left(I^l L_{\underline{n}}\right) \cap H_I^0(L_{\underline{n}}) = 0,
  \]
  and hence $x \in \left(0 :_{L_{\underline{n}}} I^l\right)$. Thus for any ideal $I$ of $A$, we have
  \[ H_I^0(L_{\underline{n}}) = \left(0 :_{L_{\underline{n}}} I^l\right) \quad \forall~ \underline{n} \in \mathbb{N}^t,\]
  which completes the proof of the theorem.
 \end{proof}

\section*{Acknowledgements}
 I would like to express my sincere gratitude to my supervisor, Prof. Tony J. Puthenpurakal, for his generous guidance and
 valuable suggestions concerning this article. I would also like to thank NBHM, DAE, Govt. of India for providing financial
 support for this study.

\end{document}